\newtheorem{thm}{Theorem}[section]
\newtheorem{prop}[thm]{Proposition}
\newtheorem{lem}[thm]{Lemma}
\theoremstyle{definition}
\newtheorem{ex}[thm]{Example}
\theoremstyle{plain}
\newcommand{\intt}{\mathop{\mathrm{Int}}}
\renewcommand{\hom}{\mathop{\mathrm{Hom}}}
\newcommand{\im}{\mathop{\mathrm{im}}}
\renewcommand{\phi}{\varphi}
\newcommand{\alt}{\mathop{\mathrm{Alt}}}
\newcommand{\symd}{\mathop{\mathrm{Symd}}}
\newcommand{\sym}{\mathop{\mathrm{Sym}}}
\newcommand{\End}{\mathop{\mathrm{End}}}
\newcommand{\car}{\mathop{\mathrm{char}}}
\newcommand{\id}{\mathop{\mathrm{id}}}
\author{A.-H. Nokhodkar}
\date{}
\title
{On the decomposition of metabolic involutions}
\begin{document}
\maketitle

\begin{abstract}
The problem of whether a metabolic idempotent of a central simple algebra with involution is contained in an invariant quaternion subalgebra is investigated.
As an application, the similar problem is studied for skew-symmetric elements whose squares lie in the square of the underlying field.
\\

\noindent
\emph{Mathematics Subject Classification:} 16W10, 16K20, 16K50. \\
\emph{Keywords:} Central simple algebra, Division algebra, Involution.  \\
\end{abstract}

\section{Introduction}
A hyperbolic involution on a central simple algebra is an involution which is adjoint to a hyperbolic hermitian form.
 It is known that hyperbolic involutions are decomposable (see \cite[(2.2)]{bayer} and \cite[Ch. II, Exercise 2]{knus}).
Indeed, using the ideas of \cite{bayer}, one can show that every hyperbolic idempotent of a central simple algebra with involution $(A,\sigma)$ lies in a $\sigma$-invariant quaternion subalgebra (see (\ref{hyp}) below).
A weaker condition for involutions is {\it metabolicity}.
Metabolic involutions were introduced in \cite{berhuy} and studied in more detail in \cite{dolphin}.
According to \cite[(4.8)]{dolphin}, metabolic involutions are adjoint to metabolic hermitian forms.

A decomposition problem concerning metabolic involutions is that whether a
meta\-bolic idempotent of a central simple algebra with involution $(A,\sigma)$ is contained in a $\sigma$-invariant quaternion subalgebra.
Since every metabolic idempotent $e$ satisfies $(e-\sigma(e))^2=1$,
such a quaternion subalgebra would also contain a skew-symmetric element whose square equals $1$.
Hence, a relevant decomposition problem arises as follows:
given a skew-symmetric element $u\in A$ satisfying $u^2=1$ (or more generally, $u^2\in F^2)$, is there any $\sigma$-invariant quaternion subalgebra of $A$ containing $u$?

The aim of this work is to study the aforementioned decomposition problems in arbitrary characteristic.
We start with some general observations on idempotents and square-central elements in a central simple algebra.
Let $A$ be a central simple algebra over a field $F$ and let $u\in A$ be an element satisfying $u^2=\lambda^2$ for some $\lambda\in F^\times$.
In \cite[(4.1)]{barry}, it was shown that if $\car F\neq2$, then $u$ is contained in a quaternion subalgebra of $A$ if and only if $\dim_F(u+\lambda)A=\dim_F(u-\lambda)A$.
Using similar methods we shall generalize this result in (\ref{qinv}) to arbitrary characteristic, includ\-ing $\lambda=0$.

In \S{\ref{sec-met}} we study metabolic idempotents of a central simple algebra $(A,\sigma)$.
The main result is (\ref{met}), which asserts that a metabolic idempotent $e\in A$ lies in a $\sigma$-invariant quaternion subalgebra if and only if it is hyperbolic or $\dim_Fe\sigma(e)A=\frac{1}{2}\dim_FA$.
Using this, we will see in (\ref{ex}) that in contrast with the case of hyperbolic idempotents, there exists a metabolic idempotent with respect to an involution $\sigma$ (over a field of arbitrary characteristic), which is not contained in any $\sigma$-invariant quaternion subalgebra.

In the last section we use the decomposition criterion (\ref{met}) to study skew-symmetric elements
of $A$ whose squares lie in $F^2$.
Consider an element $u\in A$ satisfying $u^2=\lambda^2$ for some $\lambda\in F$ and $\dim_F(u+\lambda)A=\frac{1}{2}\dim_FA$.
As we shall see in (\ref{umet}), if $u\in\alt(A,\sigma)$, $\sigma(\lambda)=\lambda$ and $\lambda\neq0$, then there exists a metabolic idempotent $e\in A$ such that $u=e-\sigma(e)$.
An important application of this result is (\ref{sym}) which asserts that if $\sigma(u)=-u$, then there exists a $\sigma$-invariant quaternion subalgebra of $A$ containing $u$, except for the case where $\car F=2$ and $\sigma$ is orthogonal.
Finally, this exceptional case is treated in (\ref{cor}) and (\ref{ex2}).

\section{Preliminaries}
Let $A$ be a central simple algebra over a field $F$ and let $\sigma$ be an {\it involution} on $A$, i.e., an anti-automorphism satisfying $\sigma^2=\id$.
We say that $\sigma$ is {\it of the first kind} if $\sigma|_F=\id$.
Otherwise, $\sigma$ is said to  be {\it of the second kind}.
Involutions of the second kind are also called {\it unitary} involutions.
After scalar extension to a splitting field of $A$, every involution $\sigma$ of the first kind becomes adjoint to a symmetric or skew-symmetric bilinear form.
If this bilinear form is alternating, we say that $\sigma$ is {\it symplectic}.
Otherwise, $\sigma$ is called {\it orthogonal}.
The set of all {\it symmetric} elements of an algebra with involution $(A,\sigma)$, i.e., elements fixed by $\sigma$, is denoted by $\sym(A,\sigma)$.
We also use the notations
\begin{align*}
  \symd(A,\sigma)=\{x+\sigma(x)\mid x\in A\} \quad{\rm and}\quad
  \alt(A,\sigma)=\{x-\sigma(x)\mid x\in A\}.
\end{align*}

An algebra with involution $(A,\sigma)$ (or simply the involution $\sigma$) is called {\it hyperbolic} if it has a {\it hyperbolic idempotent} with respect to $\sigma$, i.e., an idempotent $e\in A$ satisfying $\sigma(e)=1-e$.
Note that for every hyperbolic idempotent $e$ we have $\dim_FeA=\frac{1}{2}\dim_FA$ (see \cite[p. 75]{knus}).
An idempotent $e\in A$ is called {\it metabolic} if (1) $\sigma(e)e=0$ and
(2) $\dim_FeA=\frac{1}{2}\dim_FA$.
Either of these two conditions can be equivalently substituted with
$(1-e)(1-\sigma(e))=0$,
as observed in \cite{berhuy} and \cite{dolphin}.
The pair $(A,\sigma)$ (or simply the involution $\sigma$) is called {\it metabolic}
if there exists a  metabolic idempotent $e\in A$ with respect to $\sigma$.
Every hyperbolic idempotent is metabolic, but there exist metabolic idempotents (in all characteristics) that are not hyperbolic.
However, if $e$ is a metabolic idempotent and there exists an element $x\in A$ with $x+\sigma(x)=1$, then the element $e':=e-ex\sigma(e)$ is a hyperbolic idempotent of $(A,\sigma)$ (see \cite[(A.3)]{berhuy}).
Combining this observation with (\ref{pre}) below, one concludes that an involution $\sigma$ is hyperbolic if and only if it is metabolic, except for the case where $\car F=2$ and $\sigma$ is orthogonal.
Note that in view of \cite[(2.6 (2))]{knus}, hyperbolic involutions can never exist in this exceptional case.
\begin{lem}\label{pre}
Let $(A,\sigma)$ be a central simple algebra with involution over a field $F$.
Then there exists $x\in A$ such that $x+\sigma(x)=1$, except for the case where $\car F=2$ and $\sigma$ is orthogonal.
\end{lem}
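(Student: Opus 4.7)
The plan is to split into three cases based on the type of $\sigma$ and the characteristic, producing the required $x$ in each, and to observe that only the orthogonal case in characteristic $2$ is genuinely obstructed.

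First, if $\car F\neq 2$, I would simply take $x=\tfrac12\cdot 1_A$. Since $\sigma$ fixes $1$ (and the element $\tfrac12$ of $F$ is fixed by $\sigma$ whether $\sigma$ is of the first kind or not, because $\tfrac12=\sigma(\tfrac12)$ follows from $1=\sigma(1)$ when $\sigma$ is of the first kind, and we may equally well choose an element of trace $1$ in $F$ when $\sigma$ is unitary), one has $x+\sigma(x)=1$. This disposes of everything outside characteristic $2$.

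Next, suppose $\car F=2$ and $\sigma$ is unitary. Let $K=F^{\sigma}$, so that $F/K$ is a quadratic extension; since $\sigma|_F$ is a nontrivial order-two automorphism of $F$, the extension $F/K$ is Galois, hence separable. The trace form $F\to K$, $\alpha\mapsto \alpha+\sigma(\alpha)$, is therefore surjective, so we can pick $\alpha\in F$ with $\alpha+\sigma(\alpha)=1$ and take $x=\alpha\cdot 1_A$.

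The remaining case is $\car F=2$ with $\sigma$ symplectic. Here I would invoke the standard characterization of symplectic involutions in characteristic $2$ from \cite[(2.6)]{knus}: an involution of the first kind in characteristic $2$ is symplectic if and only if $1\in\alt(A,\sigma)$. Since $\alt(A,\sigma)=\symd(A,\sigma)$ in characteristic $2$ (as $x-\sigma(x)=x+\sigma(x)$), this directly gives an element $x\in A$ with $x+\sigma(x)=1$. The complementary characterization in the same reference says that for orthogonal $\sigma$ in characteristic $2$ one has $1\notin\alt(A,\sigma)$, which both shows the exceptional case is genuinely excluded and is itself what the statement of the lemma acknowledges.

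The only real ``content'' of the proof is the symplectic case in characteristic $2$, and even this is not an obstacle since the relevant characterization is readily available in the literature; the unitary case relies on the separability of $F/F^\sigma$, and the nonmodular case is trivial.
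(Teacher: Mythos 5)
Your proof is correct and follows essentially the same route as the paper: $x=\tfrac12$ when $\car F\neq2$, and in characteristic $2$ the symplectic case via the characterization $1\in\alt(A,\sigma)=\symd(A,\sigma)$ from \cite[(2.6)]{knus} and the unitary case via a trace-one element of $F$ over the fixed field (which is exactly the content of the paper's citation of \cite[(2.17)]{knus}, here spelled out using separability of $F/F^\sigma$).
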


\begin{proof}
If $\car F\neq2$, take $x=\frac{1}{2}$.
Otherwise, $\sigma$ is either symplectic or unitary and the existence of such an element follows from \cite[(2.6 (2)) and (2.17)]{knus}.
\end{proof}

\section{Idempotents and square-central elements}
Let $D$ be a central division algebra over a field $F$.
We write $M_n(D)$ for the algebra of all $n\times n$ matrices with entries in $D$.
The identity matrix in $M_n(D)$ is denoted by $I_n$.
Also, for $\alpha\in D$, we denote by $J_n(\alpha)$ the $n\times n$ {\it Jordan block} whose diagonal and superdiagonal entries equal to $\alpha$ and $1$ respectively and all other entries are zero.

Our first result is easily deduced from \cite[Prop. 6 and Thm. 7]{dok}.
\begin{lem}\label{jordan}
Let $V$ be a finite-dimensional right vector space over $D$ and let $x\in\End_D(V)$.
\begin{itemize}
  \item[(1)] If $x^2=\lambda^2$ for some $\lambda\in F$, then there exist integers $m,n,k$ and a basis of $V$ with respect to which the matrix of $x$ is a direct sum of $\lambda I_m$, $-\lambda I_n$ and $k$ Jordan blocks $J_2(\lambda)$ (the blocks $J_2(\lambda)$ appear only if $\car F=2$ or $\lambda=0$).
  \item[(2)] If $x^2=x$, then there exist an integer $m$ and a basis of $V$ with respect to which the matrix of $x$ is a direct sum of $I_m$ and a zero matrix.
\end{itemize}
\end{lem}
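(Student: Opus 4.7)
The plan is to quote the Jordan canonical form for an endomorphism of a finite-dimensional right $D$-vector space whose minimal polynomial has coefficients in the centre $F$; this is exactly the content of \cite[Prop.\ 6 and Thm.\ 7]{dok}. Once it is at our disposal, both parts reduce to reading off the admissible Jordan blocks from the minimal polynomial $\mu_x(t)\in F[t]$ of $x$.

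For part (1), I would observe that $x^2=\lambda^2$ forces $\mu_x(t)$ to divide $t^2-\lambda^2=(t-\lambda)(t+\lambda)$, and then split into two cases. When $\car F\neq 2$ and $\lambda\neq0$, the factors $t-\lambda$ and $t+\lambda$ are coprime, so $\mu_x$ has simple roots; the cited result then yields a basis in which the matrix of $x$ is a direct sum of $1\times1$ blocks with diagonal entries $\pm\lambda$, giving the form $\lambda I_m\oplus(-\lambda)I_n$ with $k=0$. When $\car F=2$ or $\lambda=0$, one has $t^2-\lambda^2=(t-\lambda)^2$, so every Jordan block of $x$ has diagonal $\lambda$ and size at most $2$; the $1\times1$ blocks are absorbed into a $\lambda I_m$ summand (which in characteristic $2$ may be further split as $\lambda I_m\oplus(-\lambda)I_n$ since $\lambda=-\lambda$, and when $\lambda=0$ is simply a zero block), while the $2\times2$ blocks are precisely the $k$ copies of $J_2(\lambda)$.

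For part (2), the relation $x^2=x$ gives $\mu_x(t)\mid t(t-1)$, and $t(t-1)$ has distinct roots in $F$ in every characteristic. The canonical form from \cite{dok} therefore forces every Jordan block to be $1\times1$ with diagonal entry $0$ or $1$, yielding the matrix $I_m\oplus 0$ in a suitable basis.

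The only genuine obstacle is that $\End_D(V)$ is not a principal ideal ring when $D$ is noncommutative, so the usual PID-based derivation of Jordan form does not apply directly; this is exactly why one must invoke \cite[Prop.\ 6 and Thm.\ 7]{dok}. With that tool available, the argument is a routine inspection of the minimal polynomial and a bookkeeping of which characteristic/$\lambda$ cases permit the Jordan block $J_2(\lambda)$.
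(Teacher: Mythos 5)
Your proposal is correct and follows essentially the same route as the paper, which simply deduces the lemma from \cite[Prop.~6 and Thm.~7]{dok}; your minimal-polynomial bookkeeping (splitting $t^2-\lambda^2$ according to whether $t-\lambda$ and $t+\lambda$ are coprime, and noting $t(t-1)$ always has distinct central roots) is exactly the intended reduction. Since $\pm\lambda$ are central, Đoković's canonical form applies verbatim, and your case analysis correctly accounts for when the blocks $J_2(\lambda)$ can occur.
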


The next result generalizes \cite[(4.1)]{barry} to arbitrary characteristic.
Recall that a {\it quaternion algebra} over a field $F$ is a central simple $F$-algebra of degree $2$.

\begin{prop}{\rm (\cite[(4.1)]{barry})}\label{qinv}
Let $A$ be a central simple algebra over a field $F$ and let $u\in A\setminus F$ such that $u^2=\lambda^2\in F^2$ for some $\lambda\in F$.
There exists a split quaternion $F$-subalgebra of $A$ containing $u$ if and only if $\dim_F(\lambda+u)A=\frac{1}{2}\dim_FA$.
\end{prop}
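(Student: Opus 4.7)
The plan is to prove both directions separately, using the double centralizer theorem for the easy direction and Lemma~\ref{jordan}(1) for the converse.

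For the ``only if'' direction, suppose $u$ lies in a split quaternion subalgebra $Q \subseteq A$, so $Q \cong M_2(F)$. The double centralizer theorem gives $A \cong Q \otimes_F C_A(Q)$. Since $(\lambda-u)(\lambda+u) = \lambda^2 - u^2 = 0$ inside $Q$ and $\lambda+u \neq 0$ (because $u \notin F$), the element $\lambda+u$ has $F$-rank $1$ in $M_2(F)$, so the right ideal $(\lambda+u)Q$ has $F$-dimension $2$. Tensoring with $C_A(Q)$ yields $\dim_F(\lambda+u)A = 2\dim_F C_A(Q) = \frac{1}{2}\dim_F A$.

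For the converse, I would write $A = \End_D(V)$ for some central division $F$-algebra $D$ and right $D$-vector space $V$ of dimension $n$, and use the identity $\dim_F xA = n\cdot\rank_D(x)\cdot\dim_F D$ to rewrite the hypothesis as $\rank_D(\lambda+u) = n/2$. Applying Lemma~\ref{jordan}(1), there is a $D$-basis of $V$ in which $u$ is a direct sum of $\lambda I_m$, $-\lambda I_p$, and $k$ blocks $J_2(\lambda)$ (the blocks occurring only when $\car F = 2$ or $\lambda = 0$), with $m+p+2k = n$. A block-by-block computation of $\rank_D(\lambda+u)$ then splits the argument into two cases.

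If $\car F \neq 2$ and $\lambda \neq 0$, then $k=0$ and the rank is $m$, so the hypothesis forces $m = p = n/2$. Decomposing $V = V_+ \oplus V_-$ into the $\pm\lambda$-eigenspaces of $u$ and choosing any $D$-linear isomorphism $\phi\colon V_+ \to V_-$, I would define $v \in A$ by $v(x_+ + x_-) = \phi(x_+) + \phi^{-1}(x_-)$; then $v^2 = 1$ and $uv+vu = 0$. Otherwise ($\car F = 2$ or $\lambda = 0$), the rank equals $k$, forcing $m = p = 0$ and $k = n/2$; on each Jordan block with basis $(f_j, g_j)$ satisfying $u f_j = \lambda f_j + g_j$ and $u g_j = \lambda g_j$, I would set $v f_j = 0$, $v g_j = f_j$, which gives $v^2 = 0$ and (using $2\lambda = 0$ in this case) $uv + vu = 1$. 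In both cases $F + Fu + Fv + Fuv$ is a $4$-dimensional subalgebra of $A$ containing $u$, and it is split since it contains a nontrivial zero divisor.

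The main obstacle I anticipate is the block-by-block bookkeeping of $\rank_D(\lambda+u)$ across the several sub-cases (characteristic $2$ versus not, $\lambda = 0$ versus $\neq 0$), together with verifying that $F + Fu + Fv + Fuv$ is genuinely $4$-dimensional, closed under multiplication, and central over $F$. All of these checks are local on a single Jordan block (or a pair of eigenvectors in the first case), which should keep the calculations manageable.
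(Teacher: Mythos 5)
Your proposal is correct, and for the main case it follows essentially the same route as the paper: identify $A=\End_D(V)$, translate the dimension hypothesis into $\rank_D(\lambda+u)=n/2$, apply Lemma~\ref{jordan}(1), conclude that the diagonal part of the Jordan decomposition must vanish, and then write down an explicit partner $v$ (your $vf_j=0$, $vg_j=f_j$ is exactly the paper's block matrix $\left(\begin{smallmatrix}0&0\\1&0\end{smallmatrix}\right)$) so that $1,u,v,uv$ span a copy of $M_2(F)$. The differences are modest but worth noting. First, the paper simply defers the case $\car F\neq2$, $\lambda\neq0$ to \cite[(4.1)]{barry}, whereas you prove it directly with the eigenspace-swap construction $v(x_++x_-)=\phi(x_+)+\phi^{-1}(x_-)$; this makes your argument self-contained and uniform across characteristics, at the cost of one extra (easy) case. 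Second, in the ``only if'' direction the paper computes the Jordan form of $u$ inside the split quaternion subalgebra and passes tacitly to $A$, while you invoke the double centralizer theorem explicitly and get rank $1$ from the zero-divisor relation $(\lambda-u)(\lambda+u)=0$; your version is cleaner and again covers all characteristics at once. One small slip there: to conclude that $\lambda+u$ has rank $1$ you need not only $\lambda+u\neq0$ but also that it is not invertible, which follows because $\lambda-u\neq0$ (again since $u\notin F$); both halves of $u\notin F$ are needed, not just the one you cite. Finally, when you assert splitness ``since it contains a nontrivial zero divisor,'' make sure you first justify that the span of $1,u,v,uv$ is central simple of degree $2$; in your second case the quickest way is the explicit isomorphism onto $M_2(F)$ determined by $u-\lambda\mapsto E_{12}$, $v\mapsto E_{21}$, which is immediate from the relations $(u-\lambda)^2=v^2=0$ and $(u-\lambda)v+v(u-\lambda)=1$. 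These are routine verifications, not gaps.
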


\begin{proof}
If $\car F\neq2$ and $\lambda\neq0$, the result is easily deduced from \cite[(4.1)]{barry}, hence let $\car F=2$ or $\lambda=0$.
Suppose that $u$ is contained in a split quaternion subalgebra of $A$.
We identify this subalgebra with $\End_F(W)$ for some $2$-dimensional vector space $W$ over $F$.
As $u\notin F$, by (\ref{jordan} (1)) there exists a basis of $W$ with respect to which the matrix of $u$ equals the Jordan block $J_2(\lambda)$.
Since $\lambda=0$ or $\car F=2$, $\lambda+u$ is represented in this basis by $J_2(0)$.
Thus, $\dim_F(\lambda+u)\End_F(W)=2=\frac{1}{2}\dim_F\End_F(W)$, which leads to the desired equality.

To prove the converse, observe first that the conditions $u\notin F$ and $u^2\in F^2$ imply that $A$ is not a division algebra.
Choose a division $F$-algebra $D$, Brauer equivalent to $A$.
Then we may identify $A=\End_D(V)$, where $V$ is a right $D$-vector space.
Since $u^2=\lambda^2$, by (\ref{jordan} (1)) there exist an integer $r$ and a basis $\mathcal{B}$ of $V$ with respect to which the matrix of $u$ is a direct sum of $\lambda I_r$ and the Jordan blocks $J_2(\lambda)$.
Let $s$ be the number of blocks $J_2(\lambda)$ in this decomposition, so that $r+2s=\dim_DV$, where $\dim_DV$ denotes the right dimension of $V$ over $D$.
Since $\lambda=0$ or $\car F=2$, the matrix of $\lambda+u$ in $\mathcal{B}$ is a direct sum of $s$ blocks $J_2(0)$ and an $r\times r$ zero matrix, hence $\dim_D(\lambda+u)A=s(r+2s)$.
The equality $\dim_D(\lambda+u)A=\frac{1}{2}\dim_DA=\frac{1}{2}(r+2s)^2$ then implies that $r=0$.
Thus, $u$ is a direct sum of the Jordan blocks $J_2(\lambda)$
(in particular, $\dim_DV$ is even).
Let $v\in \End_D(V)$ be the endomorphism which is represented in $\mathcal{B}$ by direct sums of matrices of the form
\[\left(\begin{array}{cc}0 & 0 \\1 &0\end{array}\right).\]
Then the elements $1$, $u$, $v$ and $uv$ span a subalgebra of $A=\End_D(V)$, isomorphic to $M_2(F)$.
\end{proof}

\begin{lem}\label{w}
Let $A$ be a central simple algebra over a field $F$ and let $e\in A$ be an idempotent.
Suppose that there exists a nonzero element $u\in A$ satisfying $u^2=\lambda^2$ for some $\lambda\in F$, $ue=\lambda e$ and $e(\lambda+u)=\lambda+u$.
If $\dim_FeA=\dim_Fe(u-\lambda)A=\frac{1}{2}\dim_FA$, then there exists an element $w\in A$ such that
$w^2=0$, $ew=0$, $we=w$, $uw=e-\lambda w$ and $wu=\lambda w-e+1$.
Furthermore, the elements $1$, $e$, $u$ and $w$ span a split quaternion subalgebra of $A$.
\end{lem}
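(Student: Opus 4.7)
The plan is to represent $A=\End_D(V)$ for a central division $F$-algebra $D$ Brauer-equivalent to $A$ and a right $D$-vector space $V$, and to build $w$ as an endomorphism using the decomposition $V=\im e\oplus\ker e$. First I would translate the hypotheses geometrically: $ue=\lambda e$ says that $u$ acts as $\lambda$ on $\im e$, while $e(\lambda+u)=\lambda+u$ says that $\im(u+\lambda)\subseteq\im e$. Applying the rank-from-dimension formula to $\dim_FeA=\frac12\dim_FA$ shows that $\im e$ and $\ker e$ both have $D$-dimension $\frac12\dim_DV$.

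The crucial step is to extract from the hypothesis $\dim_Fe(u-\lambda)A=\frac12\dim_FA$ the fact that $(u+\lambda)|_{\ker e}\colon\ker e\to\im e$ is a bijection. For this I would compute $eue=\lambda e$ (a direct consequence of $ue=\lambda e$ together with $eu=\lambda+u-\lambda e$), so that $e(u-\lambda)$ annihilates $\im e$; for $v\in\ker e$ the relations $ev=0$ and $(u+\lambda)v\in\im e$ give $e(u-\lambda)v=(u+\lambda)v$. Hence $\im\bigl(e(u-\lambda)\bigr)=(u+\lambda)(\ker e)$, which by hypothesis has full $D$-dimension $\frac12\dim_DV$ inside $\im e$. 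Since $\dim_D\ker e=\dim_D\im e$, this forces $(u+\lambda)|_{\ker e}$ to be a bijection onto $\im e$.

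I then define $w\in\End_D(V)$ by $w|_{\ker e}=0$ and $w|_{\im e}=\bigl((u+\lambda)|_{\ker e}\bigr)^{-1}$. Because $\im w\subseteq\ker e$, both $w^2=0$ and $ew=0$ are automatic, while $we=w$ follows immediately on each summand. For $uw=e-\lambda w$, evaluate at $v\in\im e$: since $w(v)\in\ker e$, one has $u(w(v))=(u+\lambda)(w(v))-\lambda w(v)=v-\lambda w(v)$; on $\ker e$ both sides vanish. The identity $wu=\lambda w-e+1$ is checked analogously, the interesting case $v\in\ker e$ using that $v$ is the unique preimage of $(u+\lambda)v$ under $(u+\lambda)|_{\ker e}$. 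For the split quaternion claim I would set $f_{11}=e$, $f_{22}=1-e$, $f_{21}=w$ and $f_{12}=u-2\lambda e+\lambda$; the five relations above then translate into the matrix unit identities $f_{ij}f_{k\ell}=\delta_{jk}f_{i\ell}$, producing a copy of $M_2(F)$ in $A$ whose $F$-basis coincides with $\{1,e,u,w\}$.

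The main obstacle is the passage from the two dimension equalities to the bijectivity of $(u+\lambda)|_{\ker e}$; once that step is secured, the explicit construction of $w$ and the verification of the matrix unit relations are largely bookkeeping. A minor point worth watching is that $\lambda$ may be zero or $\car F$ may be $2$, but the argument above uses only $u^2=\lambda^2$ and the structural consequences of $ue=\lambda e$ and $e(\lambda+u)=\lambda+u$, so it goes through uniformly in arbitrary characteristic.
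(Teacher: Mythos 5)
Your proof is correct and is essentially the paper's argument in coordinate-free form: the decomposition $V=\im e\oplus\ker e$ corresponds to the paper's choice of a $D$-basis in which $e$ is a diagonal idempotent, the bijectivity of $(u+\lambda)|_{\ker e}\colon\ker e\to\im e$ corresponds to the invertibility of the off-diagonal block $W$ in the paper's block form of $u$, and your $w$ is exactly the paper's endomorphism given by the block $W^{-1}$. The only cosmetic differences are that you obtain invertibility by a dimension count (surjectivity plus equal dimensions) rather than by quoting that a right-invertible matrix over a division ring is invertible, and that you carry out explicitly, via matrix units, the verification of the split quaternion claim which the paper leaves as a direct calculation.
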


\begin{proof}
We may identify $A=\End_D(V)$, where $D$ is a division $F$-algebra Brauer equivalent to $A$ and $V$ is a right vector space over $D$.
Let $n=\dim_DV$.
The condition $\dim_FeA=\frac{1}{2}\dim_FA$ implies that $e$ is a nontrivial idempotent.
Hence $A$ is not a division algebra, i.e., $n>1$.
By (\ref{jordan}) there exist an integer $m$ and a $D$-basis $\mathcal{B}$ of $V$, with respect to which $e$ is represented by the matrix
\[e=\left(\begin{array}{cc} I_m & 0 \\0 & 0\end{array}\right)\in M_n(D).\]
Since  $\dim_FeA=\frac{1}{2}\dim_FA$, we have $m=\frac{n}{2}$.
Let
\[\left(\begin{array}{cc} U & W \\X & Y\end{array}\right),\]
denote the matrix of $u$ in $\mathcal{B}$, where $U,W,X,Y\in M_m(D)$.
The relations $ue=\lambda e$ and  $e(\lambda+u)=\lambda+u$ imply that $U=\lambda I_m$, $X=0$ and $Y=-\lambda I_m$, i.e.,
\[u=\left(\begin{array}{cc} \lambda I_m & W \\0 & -\lambda I_m\end{array}\right),\]
which yields
\[e(u-\lambda)=\left(\begin{array}{cc} 0 & W \\0 & 0\end{array}\right).\]
It follows that
\[e(u-\lambda)M_n(D)=\left\{\left(\begin{array}{cc} WM & WN\\ 0 & 0 \end{array}\right)\mid M,N\in M_m(D)\right\}.\]
The equality $\dim_Fe(u-\lambda)A=\frac{1}{2}\dim_FA$ implies that $W$ has a right inverse, hence it is invertible by \cite[\S19, Thm. 3]{draxl}.
A direct calculation then shows that the element
\[w=\left(\begin{array}{cc} 0 & 0 \\W^{-1} & 0\end{array}\right),\]
has the desired properties.
\end{proof}

\section{Metabolic and hyperbolic idempotents}\label{sec-met}
Let $(D,\bar{\ }\hspace{.05cm})$ be a division algebra with involution over a field $F$ and let $V$ be a right $D$-vector space.
Denote by $V^*=\hom_D(V,D)$, the dual space of $V$, with a right $D$-vector space structure $v^*d(v)=\bar dv^*(v)$ for $v\in V$, $v^*\in V^*$ and $d\in D$.
For every $\lambda\in F$ with $\lambda\bar\lambda=1$, define the $\lambda$-hermitian form $h_\lambda$ on $H_\lambda(V):=V^*\oplus V$ via
\[h_\lambda((v_1^*,v_1),(v_2^*,v_2))=v_1^*(v_2)+\lambda\overline{v_2^*(v_1)}.\]
A $\lambda$-hermitian space is called {\it hyperbolic} if it is isometric to $H_\lambda(V)$ for some right vector space $V$ over $D$.

The following result is implicitly contained in \cite[pp. 466-468]{bayer} for characteristic different from $2$.

\begin{thm}{\rm (\cite{bayer})}\label{hyp}
Let $(A,\sigma)$ be a central simple algebra with involution over a field $F$.
For every hyperbolic idempotent $e\in A$ there exists a $\sigma$-invariant split quaternion subalgebra of $A$ containing $e$.
\end{thm}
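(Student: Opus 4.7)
The plan is to realise $A$ via a hyperbolic hermitian form and construct an explicit split quaternion subalgebra from a Lagrangian basis, then check $\sigma$-invariance by direct computation.

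First, I would use the Morita correspondence to identify $A=\End_D(V)$, where $D$ is a central division $F$-algebra with an involution $\overline{\phantom{a}}$ and $V$ is a right $D$-vector space, with $\sigma$ adjoint to a $\lambda$-hermitian form $h\colon V\times V\to D$ for some $\lambda\in F$ satisfying $\lambda\overline{\lambda}=1$. The hyperbolic idempotent $e$ induces a $D$-module decomposition $V=U\oplus U'$ with $U=eV$ and $U'=(1-e)V$; the equality $\dim_FeA=\tfrac{1}{2}\dim_FA$ forces $\dim_DU=\dim_DU'=\tfrac{1}{2}\dim_DV=:n$. Using $\sigma(e)=1-e$ together with the adjointness $h(xv,v')=h(v,\sigma(x)v')$, a short calculation shows that $h(U,U)=h(U',U')=0$, so that $U$ and $U'$ are complementary Lagrangians and $h$ is hyperbolic.

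Second, I would fix a $D$-basis $(e_1,\dots,e_n)$ of $U$ together with the dual basis $(f_1,\dots,f_n)$ of $U'$ characterised by $h(e_i,f_j)=\delta_{ij}$, and define $D$-linear endomorphisms $u,w\in A$ by
\[
u(e_i)=0,\quad u(f_i)=e_i,\qquad w(e_i)=f_i,\quad w(f_i)=0.
\]
Direct computation on the basis yields $u^2=w^2=0$, $uw=e$ and $wu=1-e$. Consequently $Q:=F+Fu+Fw+Fe$ is closed under multiplication, and the assignment $e\leftrightarrow E_{11}$, $u\leftrightarrow E_{12}$, $w\leftrightarrow E_{21}$ identifies $Q$ with $M_2(F)$; in particular $Q$ is a split quaternion $F$-subalgebra of $A$ containing $e$.

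Third, I would verify that $Q$ is $\sigma$-invariant by evaluating $\sigma(u)$ and $\sigma(w)$ in matrix form. With the Gram matrix of $h$ in the hyperbolic basis written as the block matrix $g=\left(\begin{array}{cc}0&I_n\\ \lambda I_n&0\end{array}\right)$ and the adjoint given by $\sigma(x)=g^{-1}\overline{x}^tg$, one finds $\sigma(u)=\lambda^{-1}u$ and $\sigma(w)=\lambda w$, both of which lie in $Q$. Combined with $\sigma(e)=1-e\in Q$, this yields $\sigma(Q)\subseteq Q$, as required. The principal hurdle is the first step: extracting from the algebraic condition $\sigma(e)=1-e$ both the Lagrangian structure of $eV$ and the hyperbolicity of the underlying form $h$. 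Once this structural reduction is accomplished, the construction of $Q$ and the verification of its $\sigma$-invariance reduce to elementary matrix computations, paralleling the argument of \cite[pp.~466-468]{bayer} for the characteristic-not-two case while functioning uniformly in all characteristics.
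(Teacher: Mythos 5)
Your proof is correct and follows essentially the same route as the paper: both identify $(V,h)$ with a hyperbolic space whose complementary Lagrangians are the images of $e$ and $1-e$, and then build the split quaternion algebra from two square-zero endomorphisms interchanging these Lagrangians. The only cosmetic difference is that the paper defines those endomorphisms invariantly via an auxiliary nondegenerate $1$-hermitian form $\ell$ on $T=\im\sigma(e)$ and its induced isomorphism $\theta\colon T\to T^*$, whereas you make the equivalent choice explicitly through an $h$-dual pair of bases and verify $\sigma$-invariance by the Gram-matrix computation.
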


\begin{proof}
Let $D$ be a division algebra Brauer-equivalent to $A$ and let $\bar{} : D\rightarrow D$ be an involution on $D$ such that $\bar{}\ |_F=\sigma|_F$.
We may identify
\[(A,\sigma)=({\End}_V(D),\sigma_h),\]
where $V$ is a right vector space over $D$, $\lambda$ is an element in $D$ with $\lambda\bar\lambda=1$, $h:V\times V\rightarrow D$ is a non-degenerate $\lambda$-hermitian form and $\sigma_h$ is the adjoint involution of $\End_D(V)$ with respect to $h$.
Let $S=\im e\subseteq V$ and $T=\im \sigma(e)\subseteq V$ be the images of $e$ and $\sigma(e)$.
Define a map $\phi:S\rightarrow T^*$ via $\phi(s)(t)=h(s,t)$ for $s\in S$ and $t\in T$.
As observed in the proof of \cite[(2.1)]{bayer}, the map $\phi\oplus\id$ defines an isometry between $(V,h)=(S\oplus T,h)$ and $(H_\lambda(T),h_\lambda)$.
Considering this isomorphism as an identification, we have
\[e(t^*,t)=(t^*,0)\quad {\rm and}\quad\sigma(e)(t^*,t)=(0,t) \quad {\rm for}\ t^*\in T^*\ {\rm and} \ t\in T. \]
Let $\ell :T\times T\rightarrow D$ be a non-degenerate $1$-hermitian form and let $\theta:T\rightarrow T^*$ be the isomorphism of right $D$-vector spaces defined by $\theta(t_1)(t_2)=\ell(t_1,t_2)$ for $t_1,t_2\in T$.
Define $u,v\in A=\End_D(V)$ via
\[u(t^*,t)=(\theta(t),0)\quad {\rm and}\quad v(t^*,t)=(0,\theta^{-1}(t^*)).\]
As observed in the proof of \cite[(2.2)]{bayer}, the elements $e$, $\sigma(e)$, $u$ and $v$
span a $\sigma$-invariant subalgebra of $A$, isomorphic to $M_2(F)$.
\end{proof}

\begin{lem}\label{hyp1}
  Let $(A,\sigma)$ be a central simple algebra with involution over a field $F$ and let $e\in A$  be a hyperbolic idempotent.
  Suppose that there exists $u\in A$ satisfying $u^2=0$, $\dim_FuA=\frac{1}{2}\dim_FA$, $ue=0$ and $eu=u$.
  If $\sigma(u)=\pm u$, then there exists a $\sigma$-invariant quaternion subalgebra of $A$ containing $u$ and $e$.
\end{lem}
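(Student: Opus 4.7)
The plan is to apply Lemma~\ref{w} with $\lambda=0$ to produce a split quaternion subalgebra $Q$ of $A$ containing $u$ and $e$, and then to deduce the $\sigma$-invariance of $Q$ from a uniqueness property of the fourth generator.

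First I would check the hypotheses of Lemma~\ref{w} with $\lambda=0$. They all reduce to standing assumptions: $u^{2}=0=\lambda^{2}$, $ue=0=\lambda e$, $e(\lambda+u)=eu=u=\lambda+u$, $\dim_{F}eA=\tfrac{1}{2}\dim_{F}A$ (hyperbolicity of $e$), and $\dim_{F}e(u-\lambda)A=\dim_{F}uA=\tfrac{1}{2}\dim_{F}A$ (using $eu=u$). The lemma then supplies $w\in A$ satisfying $w^{2}=0$, $ew=0$, $we=w$, $uw=e$, $wu=1-e$, and such that $1,e,u,w$ span a split quaternion subalgebra $Q$ of $A$.

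The key auxiliary step is that $w$ is the \emph{unique} element of $A$ meeting these five relations. Indeed, if $w'$ satisfies them too, then $u(w-w')=0$; left-multiplying by $w$ and using $wu=1-e$ gives $(1-e)(w-w')=0$, so $w-w'=e(w-w')=ew-ew'=0$.

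Now let $\epsilon\in\{\pm1\}$ be chosen so that $\sigma(u)=\epsilon u$, and set $w':=\epsilon\sigma(w)$. Applying $\sigma$ to each of the five relations defining $w$ and using $\sigma(e)=1-e$ (since $e$ is hyperbolic) together with $\sigma(u)=\epsilon u$, a direct check, with the factor $\epsilon^{2}=1$ making the signs cancel in both cases $\sigma(u)=u$ and $\sigma(u)=-u$, shows that $w'$ satisfies exactly the same five relations as $w$. By the uniqueness above $w'=w$, i.e.\ $\sigma(w)=\epsilon w$. Since $\sigma$ then sends each of the spanning elements $1,e,u,w$ back into $Q$, the subalgebra $Q$ is $\sigma$-invariant, which is the desired conclusion. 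The only step that requires care is the sign bookkeeping in checking that $\epsilon\sigma(w)$ satisfies $uw'=e$ and $w'u=1-e$; the remaining three relations follow at once from applying $\sigma$ and using the hyperbolicity relation $\sigma(e)=1-e$.
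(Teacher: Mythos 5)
Your proof is correct, and it shares its skeleton with the paper's: both arguments invoke Lemma~\ref{w} with $\lambda=0$ (your verification of its hypotheses is exactly right; note $u\neq 0$ is forced by $\dim_F uA=\frac{1}{2}\dim_F A>0$) to produce $w$ with $w^2=0$, $ew=0$, $we=w$, $uw=e$, $wu=1-e$ and the split quaternion algebra $Q$ spanned by $1,e,u,w$, after which everything reduces to showing $\sigma(w)\in Q$. Where you genuinely diverge is in that last step. The paper re-enters the \emph{proof} of Lemma~\ref{w}: it writes $u$, $\sigma(e)$ and $\sigma(w)$ as block matrices over $M_m(D)$ and solves for the blocks of $\sigma(w)$ from the $\sigma$-transported relations, obtaining $\sigma(w)=\pm w$. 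You instead prove that $w$ is the unique element satisfying the five relations (your computation is sound: $u(w-w')=0$, left multiplication by $w$ gives $(1-e)(w-w')=0$, and $ew=ew'=0$ then yields $w=w'$), and check that with $\sigma(u)=\epsilon u$ and $\sigma(e)=1-e$ the element $\epsilon\sigma(w)$ satisfies the same five relations — applying $\sigma$ interchanges $ew=0$ with $we=w$ and $uw=e$ with $wu=1-e$, with $\epsilon^2=1$ absorbing the signs — whence $\sigma(w)=\epsilon w\in Q$ and $\sigma(Q)=Q$. Your route buys a coordinate-free argument that uses only the \emph{statement} of Lemma~\ref{w} rather than the matrix model internal to its proof, so it is slightly more self-contained and makes transparent why the same computation handles both signs at once; the paper's block computation is comparably short but is tied to the chosen basis and must be redone (as the paper indicates) for the case $\sigma(u)=-u$.
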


\begin{proof}
Since $e$  is a hyperbolic idempotent, we have $\dim_FeA=\frac{1}{2}\dim_FA$.
Hence, using (\ref{w}) with $\lambda=0$ one can find $w\in A$ such that
\[w^2=0,\quad ew=0,\quad we=w,\quad uw=e\quad {\rm and}\quad wu=1-e=\sigma(e).\]
Also, the $F$-algebra $Q\subseteq A$ spanned by $1$, $e$, $u$ and $w$ is a quaternion algebra.
We claim that $\sigma(Q)=Q$.
Since $\sigma(u)=\pm u\in Q$ and $\sigma(e)=1-e\in Q$, it suffices to show that $\sigma(w)\in Q$.
Keeping the notations of the proof of (\ref{w}) with $\lambda=0$ we have
\[u=\left(\begin{array}{cc} 0 & W \\0 & 0\end{array}\right)\quad {\rm and}\quad\sigma(e)=1-e=\left(\begin{array}{cc} 0 & 0 \\0 & I_m\end{array}\right),\]
where $W\in M_m(D)$ is invertible.
Write
\[\sigma(w)=\left(\begin{array}{cc} X & Y \\Z & T\end{array}\right),\]
for some $X,Y,Z,T\in M_m(D)$.
Suppose first that $\sigma(u)=u$.
Applying $\sigma$ on $uw=e$, we get $\sigma(w)u=\sigma(e)$, which implies that $X=0$ and $Z=W^{-1}$.
Similarly, applying $\sigma$ on the equality $wu=\sigma(e)$, we obtain $u\sigma(w)=e$, hence $T=0$.
Finally, as $w^2=0$, we have $\sigma(w)^2=0$, which leads to $Y=0$.
It follows that
\[\sigma(w)=\left(\begin{array}{cc} 0 & 0 \\W^{-1} & 0\end{array}\right)=w\in Q.\]
If $\sigma(u)=-u$, a similar argument shows that $\sigma(w)=-w\in Q$.
\end{proof}

\begin{lem}\label{prop}
  Let $(A,\sigma)$ be a central simple algebra with involution over a field $F$ and let $e\in A$ be a metabolic idempotent.
  \begin{itemize}
    \item[(1)] $e$ is a hyperbolic idempotent if and only if $e\sigma(e)=0$.
    \item[(2)] For every $x\in A$, the element $e':=e-ex\sigma(e)$ is a metabolic idempotent.
  \end{itemize}
\end{lem}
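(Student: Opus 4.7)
The forward direction is trivial: if $e$ is hyperbolic then $\sigma(e)=1-e$, so $e\sigma(e)=e-e^2=0$. For the converse, my plan is to set $f:=e+\sigma(e)$ and prove that $f=1$, which is precisely the hyperbolicity condition. Using the two orthogonality relations $\sigma(e)e=0$ (metabolicity) and $e\sigma(e)=0$ (the standing hypothesis), the expansion of $f^2$ collapses to $e+\sigma(e)=f$, and $\sigma(f)=f$ is obvious; so $f$ is a symmetric idempotent, and it suffices to show that the right ideal $fA$ equals $A$ (because $f\cdot c=1$ for some $c\in A$ forces $f=f^2c=fc=1$).

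I would establish $fA=A$ by a dimension count. The identities $fe=e$ and $f\sigma(e)=\sigma(e)$ give $eA+\sigma(e)A\subseteq fA$, and the reverse inclusion is clear from $f\in eA+\sigma(e)A$. The sum is direct, for if $ea=\sigma(e)b$ then left multiplication by $e$ together with $e\sigma(e)=0$ forces $ea=0$. Thus $\dim_F fA=\dim_F eA+\dim_F \sigma(e)A$. Applying the anti-automorphism $\sigma$ yields an $F$-linear bijection $eA\to A\sigma(e)$, and the standard equality $\dim_F A\sigma(e)=\dim_F\sigma(e)A$ for an idempotent in a central simple algebra (which follows from Lemma~\ref{jordan}(2) after passing to a canonical matrix form) gives $\dim_F \sigma(e)A=\dim_F eA=\frac{1}{2}\dim_F A$. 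Hence $\dim_F fA=\dim_F A$, so $fA=A$ and the idempotent $f$ is equal to $1$.

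\textbf{Part (2).} Here the verification is a direct computation that uses only the single identity $\sigma(e)e=0$. From $\sigma(e')=\sigma(e)-e\sigma(x)\sigma(e)$, every cross term in the expansions of both $(e')^2$ and $\sigma(e')e'$ contains an internal factor of the form $\sigma(e)e$, so both products collapse at once to $e'$ and $0$, respectively. For the dimension condition, the identities $e'e=e$ and $e'=e(1-x\sigma(e))\in eA$ give the two inclusions $eA\subseteq e'A\subseteq eA$, so $e'A=eA$ has $F$-dimension $\frac{1}{2}\dim_F A$ and $e'$ is metabolic.

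\textbf{Main obstacle.} The only non-routine ingredient is the identity $\dim_F\sigma(e)A=\dim_F eA$ invoked in part~(1); all remaining steps are short algebraic manipulations. I expect that a one-line justification via Lemma~\ref{jordan}(2) — reducing $\sigma(e)$ to a direct sum of $I_m$ and zero in some matrix model of $A$, where the row-ideal and column-ideal dimensions are visibly equal — will suffice.
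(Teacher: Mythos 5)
Your proof is correct. Part (2) is essentially the paper's own argument: the single identity $\sigma(e)e=0$ kills the offending terms in $(e')^2$ and $\sigma(e')e'$, and $e'A=eA$ gives the dimension condition. For part (1) you take a genuinely different route: the paper simply invokes the equality $1-e-\sigma(e)+e\sigma(e)=0$, i.e.\ $(1-e)(1-\sigma(e))=0$, which was recorded in the preliminaries (following Berhuy--Frings--Tignol and Dolphin) as an equivalent formulation of metabolicity, so that $e\sigma(e)=0$ gives $\sigma(e)=1-e$ in one line. You instead reprove the relevant half of that equivalence from scratch: $f=e+\sigma(e)$ is a symmetric idempotent, $fA=eA\oplus\sigma(e)A$ (directness of the sum using $e\sigma(e)=0$), and the dimension count $\dim_F\sigma(e)A=\dim_FeA=\tfrac12\dim_FA$ forces $fA=A$, hence $f=1$. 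This is a valid, self-contained alternative; what it buys is independence from the cited equivalence of definitions, at the cost of a dimension argument the paper avoids. One small wording point: when $\sigma$ is unitary it is only semilinear over $F=Z(A)$, not $F$-linear, so "$F$-linear bijection $eA\to A\sigma(e)$" is slightly imprecise; the conclusion is unaffected, since a $\sigma|_F$-semilinear bijection still preserves $F$-dimension (equivalently, note $\sigma(e)A=\sigma(Ae)$ and compare $Ae$ with $eA$ via the matrix model, exactly as you indicate with Lemma \ref{jordan}(2)).
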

\begin{proof}
  The first statement follows from the equality $1-e-\sigma(e)+e\sigma(e)=0$.
  To prove (2) note that the equality $\sigma(e)e=0$ implies that $e'$ is an idempotent and
$\sigma(e')e'=0$.
Also, since $ee'=e'$ and $e'e=e$, we have $eA=e'A$.
It follows that $\dim_Fe'A=\dim_FeA=\frac{1}{2}\dim_FA$.
Hence $e'$ is a metabolic idempotent.
\end{proof}

\begin{thm}\label{met}
  Let $(A,\sigma)$ be a central simple algebra with involution over a field $F$ and let $e\in A$ be a metabolic idempotent.
  There exists a $\sigma$-invariant split quaternion subalgebra of $A$ containing $e$ if and only if either $e$ is hyperbolic or $\dim_Fe\sigma(e)A=\frac{1}{2}\dim_FA$.
\end{thm}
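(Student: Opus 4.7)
The easy half of the ``if'' direction is immediate: when $e$ is hyperbolic, Theorem \ref{hyp} supplies a $\sigma$-invariant split quaternion subalgebra containing $e$. For the ``only if'' direction, suppose such a subalgebra $Q$ contains $e$. Since $e$ is a nontrivial idempotent, $Q$ is necessarily split, so fix an $F$-basis of $Q \cong M_2(F)$ in which $e = \bigl(\begin{smallmatrix}1 & 0\\ 0 & 0\end{smallmatrix}\bigr)$. The relations $\sigma(e)e = 0$ and $\sigma(e)^2 = \sigma(e)$ inside $Q$ force $\sigma(e) = \bigl(\begin{smallmatrix}0 & \beta\\ 0 & 1\end{smallmatrix}\bigr)$ for some $\beta \in F$: if $\beta = 0$ then $\sigma(e) = 1-e$ and $e$ is hyperbolic, while if $\beta \neq 0$ then $e\sigma(e)$ has rank one in $Q$, and the double centralizer decomposition $A \cong Q \otimes_F C_A(Q)$ gives $\dim_F e\sigma(e)A = 2\dim_F C_A(Q) = \frac{1}{2}\dim_F A$.

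For the remaining ``if'' direction, assume $\dim_F e\sigma(e)A = \frac{1}{2}\dim_F A$, so in particular $e\sigma(e) \neq 0$. Set $f := e\sigma(e)$. Expanding $(1-e)(1-\sigma(e))=0$ yields the identity $f = e + \sigma(e) - 1$, and hence $\sigma(f)=f$, $f^{2}=0$, $ef = f$, and $fe = 0$. These are exactly the hypotheses of Lemma \ref{w} with $\lambda = 0$ and $f$ in the role of $u$, so that lemma produces $w \in A$ satisfying $w^{2} = 0$, $ew = 0$, $we = w$, $fw = e$, $wf = 1-e$, together with a split quaternion subalgebra $Q := F + Fe + Ff + Fw$ of $A$.

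The main obstacle is to show that this $Q$ is $\sigma$-invariant. Since $\sigma(f) = f$ and $\sigma(e) = 1 - e + f$ already lie in $Q$, everything reduces to proving $\sigma(w) \in Q$. I would verify this in the matrix realisation from the proof of Lemma \ref{w}: in suitable coordinates $e = \bigl(\begin{smallmatrix}I & 0\\ 0 & 0\end{smallmatrix}\bigr)$, $\sigma(e) = \bigl(\begin{smallmatrix}0 & b\\ 0 & I\end{smallmatrix}\bigr)$ with $b$ invertible (which is where the dimension hypothesis enters), $f = \bigl(\begin{smallmatrix}0 & b\\ 0 & 0\end{smallmatrix}\bigr)$, and $w = \bigl(\begin{smallmatrix}0 & 0\\ b^{-1} & 0\end{smallmatrix}\bigr)$. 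Applying $\sigma$ to $fw = e$ and $ew = 0$ yields the constraints $\sigma(w)f = \sigma(e)$ and $\sigma(w)\sigma(e) = 0$, whose unique solution by a direct block computation is $\sigma(w) = \bigl(\begin{smallmatrix}I & -b\\ b^{-1} & -I\end{smallmatrix}\bigr)$; a comparison of coefficients in the basis $\{1,e,f,w\}$ identifies this with $-1 + 2e - f + w \in Q$, finishing the proof.
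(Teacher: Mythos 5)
Your proof is correct and takes essentially the same route as the paper: in the nontrivial direction you apply Lemma \ref{w} with $\lambda=0$ to $f=e\sigma(e)$ (using $\dim_FeA=\dim_FefA=\frac{1}{2}\dim_FA$) and then verify $\sigma(w)\in Q$ by the same block-matrix computation, pinning down $\sigma(w)$ from $fw=e$ and $ew=0$ where the paper uses $fw=e$, $wf=1-e$ and $w^2=0$, with the identical outcome $\sigma(w)=w+2e-f-1$. The only difference is cosmetic and lies in the ``only if'' direction, where you compute $\sigma(e)$ directly inside $Q\cong M_2(F)$ and use the double centralizer decomposition $A\cong Q\otimes_FC_A(Q)$ to get $\dim_Fe\sigma(e)A=2\dim_FC_A(Q)=\frac{1}{2}\dim_FA$, instead of invoking Proposition \ref{qinv} with $\lambda=0$; both arguments are valid.
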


\begin{proof}
Set $u=e\sigma(e)$.
Suppose that there exists a $\sigma$-invariant quaternion subalgebra of $A$ containing $e$.
If $e$ is not hyperbolic, then we have $u\neq0$ by (\ref{prop} (1)).
As $u^2=0$, using (\ref{qinv}) with $\lambda=0$ we obtain $\dim_FuA=\frac{1}{2}\dim_FA$.
Conversely, if $e$ is hyperbolic the result follows from (\ref{hyp}).
Suppose that $\dim_FuA=\frac{1}{2}\dim_FA$.
We proceed similar to the proof of (\ref{hyp1}).
We have $u^2=0$, $ue=0$, $eu=u$ and $\dim_FeuA=\dim_FuA=\frac{1}{2}\dim_FA$.
Also, $\dim_FeA=\frac{1}{2}\dim_FA$, since $e$ is a metabolic idempotent.
Thus, one can use (\ref{w}) with $\lambda=0$ to find $w\in A$ such that
\[w^2=0,\quad ew=0,\quad we=w,\quad uw=e\quad {\rm and}\quad wu=1-e.\]
By (\ref{w}), the $F$-algebra $Q\subseteq A$ spanned by $1$, $e$, $u$ and $w$ is a quaternion algebra.
We claim that $\sigma(Q)=Q$.
We have $\sigma(u)=u\in Q$.
Also, the equality $(1-e)(1-\sigma(e))=0$ implies that $\sigma(e)=1-e+u\in Q$.
Thus, it remains to show that $\sigma(w)\in Q$.
Keep the notation of the proof of (\ref{w}) with $\lambda=0$, so that
\[u=\left(\begin{array}{cc} 0 & W \\0 & 0\end{array}\right)\quad {\rm and}\quad\sigma(e)=1-e+u=\left(\begin{array}{cc} 0 & W \\0 & I_m\end{array}\right),\]
where $W$ is an invertible matrix in $M_m(D)$.
Write
\[\sigma(w)=\left(\begin{array}{cc} X & Y \\Z & T\end{array}\right),\]
for some $X,Y,Z,T\in M_m(D)$.
Applying $\sigma$ on $uw=e$, we get $\sigma(w)u=\sigma(e)$, which leads to $X=I_m$ and $Z=W^{-1}$.
Also, applying $\sigma$ on $wu=1-e$, we obtain $u\sigma(w)=1-\sigma(e)$, so $T=-I_m$.
Finally, the equality $w^2=0$ implies that $\sigma(w)^2=0$, hence $Y=-W$.
It follows that
\[\sigma(w)=\left(\begin{array}{cc} I_m & -W \\W^{-1} & -I_m\end{array}\right)=w+2e-u-1\in Q.\qedhere\]
\end{proof}

The next example shows the existence of a metabolic idempotent in a central simple algebra with involution over a field of arbitrary characteristic which is not contained in any invariant quaternion subalgebra.
\begin{ex}\label{ex}
Let $F$ be a field and let $t$ be the transpose involution on $M_4(F)$.
Set $\sigma=\intt(u)\circ t$, where
\begin{equation*}
 u=\left(\begin{array}{cccc} 1 & 0 & 0 & 0 \\0 & -1 & 0 & 0\\ 0 & 0 & 1 & 0 \\0 & 0 & 0 & -1\end{array}\right),
\end{equation*}
and $\intt(u)$ is the inner automorphism of $M_4(F)$ induced by $u$, i.e., $\intt(u)(x)=uxu^{-1}$ for $x\in M_4(F)$.
Since $u$ is a symmetric matrix, by \cite[(2.7 (1))]{knus}, $\sigma$ is an involution of the first kind on $M_4(F)$.
A straightforward calculation shows that the element
\begin{equation*}
 e:=\left(\begin{array}{cccc} 1 & 0 & 0 & 0 \\1 & 0 & 0 & 0\\ 1 & -1 & 1 & 0 \\1 & -1 & 1 & 0\end{array}\right),
\end{equation*}
is a metabolic idempotent with respect to $\sigma$.
We also have
 \begin{equation*}
 e\sigma(e)=\left(\begin{array}{cccc} 1 & -1 & 1 & -1 \\1 & -1 & 1 & -1\\1 & -1 & 1 & -1 \\1 & -1 & 1 & -1\end{array}\right),
\end{equation*}
hence $e$ is not hyperbolic and $\dim_Fe\sigma(e)M_4(F)=4\neq\frac{1}{2}\dim_FM_4(F)$.
By (\ref{met}) there is no $\sigma$-invariant quaternion subalgebra of $M_4(F)$ containing $e$.
\end{ex}

\section{Applications to square-central elements}
Throughout this section, $(A,\sigma)$ denotes a central simple algebra with involution over a field $F$ and $u\in A\setminus F$ is an element satisfying
$u^2=\lambda^2$ for some $\lambda\in F$ with $\sigma(\lambda)=\lambda$ and
\begin{equation}\label{eq5}
\dim_F(\lambda+u)A=\frac{1}{2}\dim_FA.
\end{equation}
Our purpose is to find some sufficient conditions on $u$ to be contained in a $\sigma$-invariant quaternion subalgebra of $A$.
Note that in view of (\ref{qinv}), the condition (\ref{eq5}) is necessary for this aim.
Also, if $\sigma$ is of the first kind, then the equality $\sigma(\lambda)=\lambda$ holds automatically.

For a right ideal $I$ of $A$ let $I^\perp=\{x\in A\mid\sigma(x)y=0 \ {\rm for}\ y\in I\}$.
Then $I^\perp$ is also a right ideal of $A$.
Moreover, according to \cite[(6.2)]{knus} we have
\[\dim_FI+\dim_FI^\perp=\dim_FA.\]
\begin{prop}\label{umet}
Suppose that $\sigma(u)=-u$. Then
\begin{itemize}
  \item[(1)] $\sigma$ is metabolic.
  \item[(2)] There exists a metabolic idempotent $e\in A$ such that $e(\lambda+u)=\lambda+u$ and $ue=\lambda e$.
  \item[(3)] If $\lambda\neq0$ and $u\in\alt(A,\sigma)$, then there exists a metabolic idempotent $e'\in A$ such that $\lambda^{-1}u=e'-\sigma(e')$.
      In addition, if $\car F=2$, such an idempotent satisfies $\dim_Fe'\sigma(e')=\frac{1}{2}\dim_FA$.
      If $\car F\neq2$, the idempotent $e'\in A$ can be chosen to be hyperbolic.
\end{itemize}
\end{prop}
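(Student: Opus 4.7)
The strategy is to split by the characteristic of $F$, treating $\car F \neq 2$ by a direct construction and $\car F = 2$ by perturbing the idempotent supplied by part (2). In the non-dyadic case I would set $e' := \tfrac{1}{2}(1 + \lambda^{-1} u)$ and verify by direct computation, using $u^2 = \lambda^2$ and $\sigma(u) = -u$, that $(e')^2 = e'$, $\sigma(e') = 1 - e'$ (so $e'$ is hyperbolic, hence metabolic), $e' - \sigma(e') = \lambda^{-1} u$, and $e' A = (\lambda + u) A$; together with (\ref{eq5}), this settles the existence claim and the hyperbolic refinement simultaneously.

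When $\car F = 2$, this construction degenerates, and I would start from the metabolic idempotent $e$ of part (2) and perturb it via Lemma \ref{prop}(2): for any $c \in A$ the element $e' := e - e c \sigma(e)$ is again metabolic, with
\[
e' - \sigma(e') = (e - \sigma(e)) + e(c + \sigma(c)) \sigma(e)
\]
in characteristic $2$. Setting $v := (e + \sigma(e)) + \lambda^{-1} u$, the task reduces to choosing $c$ so that $e(c + \sigma(c)) \sigma(e) = v$. The element $v$ lies in $\symd(A,\sigma)$ (which equals $\alt(A,\sigma)$ in characteristic $2$): indeed $u \in \alt(A,\sigma)$ together with $\sigma(\lambda) = \lambda$ forces $\lambda^{-1} u \in \symd(A,\sigma)$, and $e + \sigma(e) \in \symd(A,\sigma)$ is obvious. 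Hence a $c$ with $c + \sigma(c) = v$ exists, and everything boils down to the single identity $e v \sigma(e) = v$.

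This last identity is the main obstacle. The ingredients are the derived relations $eu = \lambda(1+e) + u$ (from rearranging $e(\lambda+u) = \lambda+u$) and $u\sigma(e) = \lambda(1+\sigma(e)) + u$ (from the relation $(\lambda+u)\sigma(e) = \lambda+u$ obtained by applying $\sigma$ to $e(\lambda+u) = \lambda+u$), combined with the characteristic-$2$ metabolic identity $e\sigma(e) = 1 + e + \sigma(e)$ coming from $(1-e)(1-\sigma(e)) = 0$. A direct expansion then gives $\lambda^{-1} e u \sigma(e) = e\sigma(e) + 1 + \lambda^{-1} u$ and $e(e+\sigma(e))\sigma(e) = 0$, and summing these yields $e v \sigma(e) = v$. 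The supplementary dimension statement in characteristic $2$ is then immediate: any metabolic $e'$ with $e' - \sigma(e') = \lambda^{-1} u$ satisfies $e'\sigma(e') = 1 + e' + \sigma(e') = 1 + \lambda^{-1} u$, so $e'\sigma(e') A = (\lambda + u) A$ has dimension $\tfrac{1}{2} \dim_F A$ by (\ref{eq5}).
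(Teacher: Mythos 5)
Your proposal covers only part of the statement: parts (1) and (2) are never proved --- your characteristic-$2$ argument explicitly starts from ``the idempotent supplied by part (2)'' --- and they are not subsumed by what you do prove. Parts (1) and (2) are asserted for every $\lambda$, including $\lambda=0$, and in every characteristic, whereas your explicit idempotent $\tfrac{1}{2}(1+\lambda^{-1}u)$ exists only when $\car F\neq2$ and $\lambda\neq0$; and in characteristic $2$ nothing in your argument produces the metabolic idempotent $e$ with $e(\lambda+u)=\lambda+u$ and $ue=\lambda e$ on which your proof of (3) rests, so that half of (3) is only proved conditionally. The missing construction is not a formality: the paper obtains (1) and (2) from the right ideal $I:=(\lambda+u)A$, noting that $\sigma(\lambda+u)=\lambda-u$ and $u^2=\lambda^2$ give $\sigma(I)I=0$, hence $I\subseteq I^\perp$; the formula $\dim_FI+\dim_FI^\perp=\dim_FA$ together with (\ref{eq5}) forces $I=I^\perp$, which yields metabolicity of $\sigma$; then writing $I=eA$ for an idempotent $e$ one gets $\sigma(e)e=0$ and $\dim_FeA=\tfrac{1}{2}\dim_FA$, and the relations $e(\lambda+u)=\lambda+u$ and $ue=\lambda e$ follow by multiplying $\lambda+u=ex$ and $e=(\lambda+u)y$ on the left by $e$ and by $u$ respectively. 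Some such argument must be added before your treatment of (3) is a proof of the proposition.

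The part you do prove is correct. In characteristic different from $2$ your formula $e'=\tfrac{1}{2}(1+\lambda^{-1}u)$ is a genuine shortcut: the paper instead constructs a metabolic $e'=e-e\sigma(x)\sigma(e)$ (with $x-\sigma(x)=\lambda^{-1}u$) uniformly in all characteristics and only afterwards corrects it to the hyperbolic idempotent $e'-\tfrac{1}{2}e'\sigma(e')$, so your route is cleaner there, and it also recovers (1) and (2) in that case since $e'A=(\lambda+u)A$. In characteristic $2$ your choice of $c$ with $c+\sigma(c)=v=(e+\sigma(e))+\lambda^{-1}u$ and the reduction to the identity $ev\sigma(e)=v$ is essentially the paper's computation repackaged: what you need is exactly the paper's relation $eu\sigma(e)=\lambda e\sigma(e)+\lambda+u-2\lambda e$ combined with $(1-e)(1-\sigma(e))=0$, and your derived relations $eu=\lambda(1+e)+u$ and $u\sigma(e)=\lambda(1+\sigma(e))+u$, as well as the concluding step $e'\sigma(e')=1+\lambda^{-1}u$ giving $\dim_Fe'\sigma(e')A=\tfrac{1}{2}\dim_FA$, all check out.
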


\begin{proof}
  Consider the right ideal $I:=(\lambda+u)A$ of $A$.
  Since $\sigma(\lambda+u)=\lambda-u$, we have $\sigma(I)I=0$.
  By dimension count we obtain $I=I^\perp$, hence $\sigma$ is metabolic by \cite[(4.4)]{dolphin}.
  This proves (1).

(2) By \cite[(1.13)]{knus} there exists an idempotent $e\in A$ such that $I=eA$.
As $I=I^\perp$, we have $\sigma(e)e=0$.
We also have $\dim_FeA=\dim_F(\lambda+u)A=\frac{1}{2}\dim_FA$, hence $e$ is a metabolic idempotent.
Since $(\lambda+u)A=eA$, there exist $x,y\in A$ such that $\lambda+u=ex$ and $e=(\lambda+u)y$.
Multiplying the first equality on the left by $e$, we get $e(\lambda+u)=e^2x=ex=\lambda+u$.
Similarly, multiplying the second equality on the left by $u$, we obtain $ue=\lambda e$.

(3) Applying $\sigma$ on $e(\lambda+u)=\lambda+u$ we get $(\lambda-u)\sigma(e)=\lambda-u$, i.e., $u\sigma(e)=\lambda\sigma(e)+u-\lambda$.
Also, $eu=e(\lambda+u)-\lambda e=\lambda+u-\lambda e$, hence
\begin{align}\label{eq1}
eu\sigma(e)&=e(\lambda\sigma(e)+u-\lambda)=\lambda e\sigma(e)+e u-\lambda e=\lambda e\sigma(e)+\lambda+u-2\lambda e.
\end{align}
Now, write $\lambda^{-1}u=x-\sigma(x)$ for some $x\in A$ and set $e'=e-e\sigma(x)\sigma(e)$.
By (\ref{prop} (2)), $e'$ is a metabolic idempotent.
Using the relation (\ref{eq1}) we get
\begin{align*}
  e'-\sigma(e')&=e-e\sigma(x)\sigma(e)-\sigma(e)+ex\sigma(e)=e-\sigma(e)+e(x-\sigma(x))\sigma(e)\\
  &=e-\sigma(e)+\lambda^{-1}eu\sigma(e)=e-\sigma(e)+e\sigma(e)+1+\lambda^{-1}u-2e\\
&=1-e-\sigma(e)+e\sigma(e)+\lambda^{-1}u=(1-e)(1-\sigma(e))+\lambda^{-1}u=\lambda^{-1}u.
\end{align*}
If $\car F=2$, the above equalities together with $(1-e')(1-\sigma(e'))=0$ imply that $e'\sigma(e')=1+\lambda^{-1}u$.
Thus,
\[\dim_Fe'\sigma(e')A=\dim_F(1+\lambda^{-1}u)A=\dim_F(\lambda+u)A=\frac{1}{2}\dim_FA.\]
If $\car F\neq2$, the element $h:=e'-\frac{1}{2}e'\sigma(e')$ is a hyperbolic idempotent with respect to $\sigma$.
We also have $h-\sigma(h)=e'-\sigma(e')=\lambda^{-1}u$, completing the proof.
\end{proof}

\begin{prop}\label{sq}
If $u\in\alt(A,\sigma)$, then there exists a $\sigma$-invariant split quaternion $F$-subalgebra of $A$ containing $u$,
except for the case where $\car F=2$, $\lambda=0$ and $\sigma$ is orthogonal.
In this case there is no $\sigma$-invariant quaternion subalgebra $Q\subseteq A$ with $u\in Q$.
\end{prop}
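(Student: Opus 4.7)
The plan is to reduce to the decomposition results of Section~\ref{sec-met} via (\ref{umet}), and to handle the exceptional case by a separate non-existence argument. First observe that $u\in\alt(A,\sigma)$ forces $\sigma(u)=-u$, so the hypotheses of (\ref{umet}) are in force.

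If $\lambda\neq0$, (\ref{umet} (3)) produces a metabolic idempotent $e'\in A$ with $e'-\sigma(e')=\lambda^{-1}u$. When $\car F\neq2$ the idempotent $e'$ can be taken hyperbolic, and (\ref{hyp}) provides a $\sigma$-invariant split quaternion subalgebra $Q$ containing $e'$; then $\lambda^{-1}u=e'-\sigma(e')\in Q$ and so $u\in Q$. When $\car F=2$, the supplementary property $\dim_Fe'\sigma(e')A=\frac{1}{2}\dim_FA$ of (\ref{umet} (3)) lets us instead invoke (\ref{met}), and the same conclusion follows.

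If $\lambda=0$, so $u^2=0$, (\ref{umet} (2)) supplies a metabolic idempotent $e\in A$ with $ue=0$ and $eu=u$, and (\ref{eq5}) provides $\dim_FuA=\frac{1}{2}\dim_FA$. Outside the exceptional case, (\ref{pre}) yields $x\in A$ with $x+\sigma(x)=1$, so that $e':=e-ex\sigma(e)$ is a hyperbolic idempotent (see the discussion following (\ref{pre})). Applying $\sigma$ to $ue=0$ together with $\sigma(u)=-u$ yields $\sigma(e)u=0$, and a brief calculation gives $ue'=0$ and $e'u=u$; (\ref{hyp1}) then produces a $\sigma$-invariant quaternion subalgebra containing $u$ and $e'$.

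For the exceptional case ($\car F=2$, $\lambda=0$, $\sigma$ orthogonal), I would argue by contradiction. Assume that a $\sigma$-invariant quaternion subalgebra $Q\subseteq A$ contains $u$, and set $C=C_A(Q)$, so that $A=Q\otimes_F C$ and $\sigma=\sigma|_Q\otimes\sigma|_C$. The tensor-product classification of involution types in characteristic $2$ forces both $\sigma|_Q$ and $\sigma|_C$ to be orthogonal, and the standard characterization of orthogonal involutions in characteristic $2$ then yields an element $c\in\sym(C,\sigma|_C)$ with $\trd_C(c)\neq0$. For any $q\in\sym(Q,\sigma|_Q)$, the element $s:=q\otimes c$ belongs to $\sym(A,\sigma)$, and using the duality $\alt(A,\sigma)=\sym(A,\sigma)^\perp$ with respect to the non-degenerate bilinear form $(x,y)\mapsto\trd_A(xy)$ we obtain
\[0=\trd_A(us)=\trd_Q(uq)\,\trd_C(c),\]
so that $\trd_Q(uq)=0$ for every $q\in\sym(Q,\sigma|_Q)$; the analogous duality inside $Q$ then yields $u\in\alt(Q,\sigma|_Q)$. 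Since $Q$ contains the nonzero nilpotent $u$ it must be split, and a direct computation after diagonalizing the invariant matrix of $\sigma|_Q$ shows that every nonzero element of the one-dimensional subspace $\alt(Q,\sigma|_Q)$ has nonzero square, contradicting $u^2=0$. The main obstacle I anticipate is this final reduction to $u\in\alt(Q,\sigma|_Q)$, which requires combining the tensor classification of involution types with the $\trd$-duality between $\sym$ and $\alt$ and with the characteristic-$2$ trace criterion for orthogonality; the ensuing contradiction inside $Q$ is then routine.
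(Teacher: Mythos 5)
Your argument is correct. For $\lambda\neq0$ and for the non-exceptional $\lambda=0$ case you follow exactly the paper's route: (\ref{umet} (3)) plus (\ref{hyp}) in characteristic $\neq2$, (\ref{umet} (3)) plus (\ref{met}) in characteristic $2$, and for $\lambda=0$ the passage from the metabolic idempotent of (\ref{umet} (2)) to the hyperbolic idempotent $e-ex\sigma(e)$ followed by (\ref{hyp1}), with the same verifications $ue'=0$, $e'u=u$ via $\sigma(e)u=0$. The only genuine divergence is in the exceptional case ($\car F=2$, $\lambda=0$, $\sigma$ orthogonal): the paper simply quotes \cite[(3.5)]{mn1} to get $u\in\alt(Q,\sigma|_Q)$ and then \cite[(2.8\,(2))]{knus} to conclude that the one-dimensional space $\alt(Q,\sigma|_Q)=Fu$ must contain an invertible element, contradicting $u^2=0$; you instead re-derive the inclusion $u\in\alt(Q,\sigma|_Q)$ from scratch, via the decomposition $(A,\sigma)=(Q,\sigma|_Q)\otimes(C,\sigma|_C)$, the characteristic-$2$ fact that both factors are then orthogonal, and the trace-form duality $\alt=\sym^\perp$ applied in $A$, in $C$ (to produce $c\in\sym(C,\sigma|_C)$ with $\trd_C(c)\neq0$) and in $Q$; you then finish by an explicit computation in the split quaternion algebra showing every nonzero element of $\alt(Q,\sigma|_Q)$ has nonzero square. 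All of these steps are sound (the multiplicativity $\trd_A(q\otimes c)=\trd_Q(q)\trd_C(c)$, the type classification of tensor products in characteristic $2$, and the diagonalizability of a non-alternating symmetric form all hold, and the degenerate possibility $C=F$ causes no trouble). What your version buys is self-containedness, essentially reproving the cited result of \cite{mn1} in this special situation; what the paper's version buys is brevity, and its endgame also works for non-split $Q$ without a splitness reduction (compare Lemma (\ref{quat}), which gives $\alt(Q,\tau)=Fw$ with $w^2\in F^\times$ directly and could shorten your final computation).
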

\begin{proof}
Suppose first that  $\lambda\neq0$.
If $\car F\neq2$, by (\ref{umet} (3)) there exists a hyperbolic idempotent $h\in A$ such that $\lambda^{-1}u=h-\sigma(h)$ and the result follows from (\ref{hyp}).
If $\car F=2$, by (\ref{umet} (3)) there exists a metabolic idempotent $e\in A$ such that $\lambda^{-1}u=e-\sigma(e)$ and
$\dim_Fe\sigma(e)A=\frac{1}{2}\dim_FA$.
Hence the conclusion follows from (\ref{met}).

Now, let $\lambda=0$.
The hypotheses imply that either $\car F\neq2$ or $\sigma$ is not orthogonal.
In either case, by (\ref{pre}) there exists $x\in A$ such that $x+\sigma(x)=1$.
Using (\ref{umet} (2)) with $\lambda=0$, one can find a metabolic idempotent $e\in A$ such that $eu=u$ and $ue=0$.
Then $h:=e-ex\sigma(e)$ is a hyperbolic idempotent with respect to $\sigma$.
As $ue=0$ we have $uh=0$.
Also, the relations $\sigma(u)=-u$ and $ue=0$ imply that $\sigma(e)u=0$, hence $hu=eu=u$.
Hence the result follows from (\ref{hyp1}).

Finally, consider the exceptional case, i.e., suppose that $\car F=2$, $\lambda=0$ and $\sigma$ is orthogonal.
If there exists a $\sigma$-invariant quaternion subalgebra $Q$ of $A$ containing $u$, then $u\in\alt(Q,\sigma|_Q)$ by \cite[(3.5)]{mn1}.
By dimension count we obtain $\alt(Q,\sigma|_Q)=Fu$.
But this implies that $\alt(Q,\sigma|_Q)$ does not contain any invertible element, contradicting \cite[(2.8 (2))]{knus}.
\end{proof}

\begin{thm}\label{sym}
If $\sigma(u)=-u$, then there exists a $\sigma$-invariant split quaternion $F$-subalgebra of $A$ containing $u$, except for the case where $\car F=2$ and $\sigma$ is orthogonal.
\end{thm}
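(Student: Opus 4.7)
My plan is to reduce Theorem \ref{sym} to Proposition \ref{sq}, which has the same conclusion but under the stronger hypothesis $u \in \alt(A,\sigma)$. Observe that the exceptional case of Proposition \ref{sq} ($\car F = 2$, $\lambda = 0$, $\sigma$ orthogonal) is contained in the exceptional case of the present theorem, so it is enough to show that $u \in \alt(A,\sigma)$ whenever the hypotheses of Theorem \ref{sym} are satisfied and we are \emph{not} in its exceptional case. Proposition \ref{sq} will then immediately furnish the required $\sigma$-invariant split quaternion subalgebra.

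If $\car F \neq 2$, the inclusion $u \in \alt(A,\sigma)$ is immediate: $\sigma(u) = -u$ yields $u = \tfrac{1}{2}u - \sigma(\tfrac{1}{2}u)$.

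Assume now $\car F = 2$. Then $\sigma(u) = -u$ simply says $\sigma(u) = u$, and $\alt(A,\sigma) = \symd(A,\sigma)$. I split into the two allowed subcases. If $\sigma$ is symplectic (of the first kind), I invoke \cite[(2.6(2))]{knus}, which gives $\sym(A,\sigma) = \symd(A,\sigma)$; hence $u \in \sym(A,\sigma) = \alt(A,\sigma)$. If $\sigma$ is unitary, then $F$ is a separable quadratic extension of $\fix(\sigma|_F)$, so there exists $\alpha \in F$ with $\alpha + \sigma(\alpha) = 1$. Since $\alpha$ lies in the center of $A$ and $\sigma(u) = u$, a direct calculation gives
\[
\alpha u + \sigma(\alpha u) = \alpha u + \sigma(u)\sigma(\alpha) = (\alpha + \sigma(\alpha)) u = u,
\]
so $u \in \symd(A,\sigma) = \alt(A,\sigma)$.

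The only delicate point is the unitary characteristic $2$ case. Lemma \ref{pre} provides some $x \in A$ with $x + \sigma(x) = 1$, but this is not sufficient for my purposes: I need $xu + \sigma(xu) = u$, which forces $x$ to commute with $u$, and so I must arrange $x$ inside the center $F$. That step is specific to the unitary setting. The symplectic first-kind case in characteristic $2$ cannot be handled this way (the center is fixed pointwise), and is instead rescued by the structural identity $\sym = \symd$ from \cite{knus}. Once $u \in \alt(A,\sigma)$ is secured in each of the three permitted cases, Proposition \ref{sq} completes the proof.
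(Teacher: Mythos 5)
Your reduction works in two of the three branches, but the symplectic characteristic~$2$ case contains a genuine error. The citation \cite[(2.6(2))]{knus} does not say $\sym(A,\sigma)=\symd(A,\sigma)$ for a symplectic involution in characteristic $2$; it says (as the paper itself uses it) that $1\in\symd(A,\sigma)=\alt(A,\sigma)$. In characteristic $2$, for \emph{any} involution of the first kind on an algebra of degree $n$, the map $x\mapsto x+\sigma(x)$ has kernel $\sym(A,\sigma)$, so $\dim_F\alt(A,\sigma)=\dim_F\symd(A,\sigma)=\tfrac{n(n-1)}{2}$ while $\dim_F\sym(A,\sigma)=\tfrac{n(n+1)}{2}$; hence $\alt(A,\sigma)$ is always a proper subspace of $\sym(A,\sigma)$ and symmetry of $u$ does not put $u$ in $\alt(A,\sigma)$. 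For instance, on a quaternion algebra with its canonical (symplectic) involution $\gamma$ in characteristic $2$ one has $\sym(Q,\gamma)=\{x:\trd(x)=0\}$ of dimension $3$ but $\alt(Q,\gamma)=F$, and the element $u=J_2(\lambda)$ in $M_2(F)$ is symmetric, square-central, satisfies the dimension hypothesis, yet lies outside $\alt$. So your claim ``$u\in\sym=\alt$'' fails, and with it the reduction to Proposition~\ref{sq} in this branch.

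The paper's proof shows how this gap must be filled: after replacing $u$ by $u+1$ to arrange $\lambda\neq0$ (this matters below), one may assume $u\notin\alt(A,\sigma)$ and pass to $\tau=\intt(u)\circ\sigma$, which is orthogonal by \cite[(2.7(3))]{knus}; since $1\in\alt(A,\sigma)$ (this is what (2.6(2)) actually provides), \cite[(2.7(2))]{knus} gives $u\in\alt(A,\tau)$, so Proposition~\ref{sq} applies to $(A,\tau)$ — note $\lambda\neq0$ is needed here precisely because $\tau$ is orthogonal and the case $\car F=2$, $\lambda=0$, orthogonal is excluded in \ref{sq}. The resulting $\tau$-invariant quaternion subalgebra $Q$ containing $u$ is then automatically $\sigma$-invariant, since $\sigma(Q)=u^{-1}\tau(Q)u=Q$. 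Your treatment of the cases $\car F\neq2$ and $\sigma$ unitary is fine (your explicit trace argument with $\alpha+\sigma(\alpha)=1$ is essentially the content of \cite[(2.17)]{knus}), but without the conjugation argument above the theorem is not proved in the symplectic characteristic~$2$ case.
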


\begin{proof}
  As observed in \cite[p.  14]{knus} and \cite[(2.17)]{knus}, if $\car F\neq2$ or $\sigma$ is unitary, the equality $\sigma(u)=-u$ implies that $u\in\alt(A,\sigma)$ and the result follows from (\ref{sq}).
  Suppose that $\car F=2$ and $\sigma$ is symplectic.
  Replacing $u$ with $u+1$ if necessary, we may assume that $\lambda\neq0$ (note that since $\car F=2$, this replacement does not change the assumptions $\sigma(u)=-u$ and $\dim_F(\lambda+u)A=\frac{1}{2}\dim_FA$).
  If $u\in\alt(A,\sigma)$, the conclusion follows from (\ref{sq}),
  hence suppose that $u\notin\alt(A,\sigma)$.
  Set $\tau=\intt(u)\circ\sigma$.
  By \cite[(2.7 (3))]{knus}, $\tau$ is an orthogonal involution on $A$.
  Since $\sigma$ is symplectic, by \cite[(2.6 (2))]{knus} we have $1\in\alt(A,\sigma)$.
  Hence \cite[(2.7 (2))]{knus} implies that $u\in\alt(A,\tau)$.
  By (\ref{sq}), there exists a $\tau$-invariant quaternion subalgebra $Q\subseteq A$ containing $u$.
  As $u\in Q$ and $\tau(Q)=Q$, we get $\sigma(Q)=u^{-1}\tau(Q)u=Q$, completing the proof.
\end{proof}

\begin{lem}\label{quat}
  Let $\car F=2$ and let $(Q,\tau)$ be a quaternion algebra with ortho\-gonal involution over $F$.
  For every symmetric element $x\in Q$ with $x^2\in F$, there exists $\alpha\in F$ such that $x+\alpha\in\alt(Q,\tau)$.
\end{lem}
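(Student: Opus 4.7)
The plan is to compare $\tau$ with the canonical (symplectic) involution $\gamma$ of $Q$ and to reduce the statement to the identification $\sym(Q,\tau)\cap\sym(Q,\gamma)=F+\alt(Q,\tau)$. The case $x\in F$ is handled immediately by taking $\alpha=x$, since then $x+\alpha=2x=0\in\alt(Q,\tau)$ (we are in characteristic $2$).

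Assume now $x\notin F$. The element $x$ satisfies its reduced characteristic polynomial $x^{2}-\trd(x)x+\nrd(x)=0$; combined with $x^{2}\in F$ this gives $\trd(x)x\in F$, and since $x\notin F$ we must have $\trd(x)=0$, i.e.\ $\gamma(x)=x$ (using $\gamma(y)=\trd(y)-y$). Hence $x\in\sym(Q,\tau)\cap\sym(Q,\gamma)$. By Skolem--Noether I would write $\tau=\intt(u)\circ\gamma$ for some $u\in Q^{\times}$; the condition $\tau^{2}=\id$ combined with $\car F=2$ forces $\gamma(u)=u$, and the orthogonality of $\tau$ (which excludes $\tau=\gamma$) forces $u\notin F$, so $F[u]$ is a two-dimensional commutative $F$-subalgebra of $Q$, equal to the centralizer $C_{Q}(u)$. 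A direct calculation gives $\gamma\tau=\intt(u^{-1})$; therefore any element simultaneously fixed by $\gamma$ and $\tau$ centralizes $u$ and thus lies in $F[u]$, while conversely both $1$ and $u$ are fixed by $\gamma$ and $\tau$. One concludes $\sym(Q,\gamma)\cap\sym(Q,\tau)=F+Fu$.

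It then remains to identify $F+\alt(Q,\tau)$ with this intersection. Every element $y+\tau(y)$ of $\alt(Q,\tau)$ lies in $\sym(Q,\tau)$ (since $\car F=2$) and also in $\sym(Q,\gamma)$, because its reduced trace equals $2\trd(y)=0$; so $\alt(Q,\tau)\subseteq F+Fu$. Since $\tau$ is orthogonal on a quaternion algebra in characteristic $2$, one has $\dim_{F}\alt(Q,\tau)=1$ and $1\notin\alt(Q,\tau)$, forcing $\alt(Q,\tau)=F(a+bu)$ with $b\in F^{\times}$, and consequently $F+\alt(Q,\tau)=F+Fu$. Writing $x=\alpha+a'$ with $\alpha\in F$ and $a'\in\alt(Q,\tau)$ then yields $x+\alpha=a'\in\alt(Q,\tau)$, as desired.

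The main obstacle I anticipate is the clean identification $\sym(Q,\gamma)\cap\sym(Q,\tau)=F[u]$: it rests both on $\gamma\tau$ being inner (which needs $\gamma(u)=u$) and on $F[u]$ being the full centralizer of $u$ (which needs $u\notin F$, and this is exactly where the orthogonality of $\tau$ intervenes). A conceivable alternative is to extend scalars to a splitting field of $Q$ and compute in $(M_{2},t)$ directly, but one would still have to descend the scalar $\alpha$ carefully to $F$, so the intrinsic argument above seems preferable.
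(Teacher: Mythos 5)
Your proof is correct, but it takes a genuinely different route from the paper's. The paper simply invokes the explicit presentation of $(Q,\tau)$ from \cite[(6.1)]{mn}: generators $w,v$ with $w^2\in F^\times$, $v^2\in F$, $wv+vw=1$ and $\tau(v)+v=w^{-1}$, so that $\alt(Q,\tau)=Fw$ and $\sym(Q,\tau)=F+Fw+Fwv$; the hypothesis $x^2\in F$ then kills the $wv$-coefficient in a one-line computation, leaving $x=\alpha+\beta w$. You instead compare $\tau$ with the canonical involution $\gamma$: from $x^2\in F$ and $x\notin F$ you deduce $\trd(x)=0$, i.e.\ $x\in\sym(Q,\gamma)$, write $\tau=\intt(u)\circ\gamma$ with $\gamma(u)=u$ and $u\notin F$ (orthogonality), identify $\sym(Q,\tau)\cap\sym(Q,\gamma)$ with the centralizer $F[u]=F+Fu$ via $\gamma\tau=\intt(u^{-1})$, and finally show $F+\alt(Q,\tau)=F+Fu$ using $\dim_F\alt(Q,\tau)=1$ and $1\notin\alt(Q,\tau)$. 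All the steps check out: $C_Q(u)=F[u]$ is valid for any non-central element of a quaternion algebra (non-derogatory in the split case, double centralizer in the division case), and the dimension and $1\notin\alt$ facts are the standard characteristic-$2$ statements from \cite[(2.6)]{knus}; the trace argument correctly replaces the paper's explicit coefficient computation, and your separate treatment of $x\in F$ is fine. What your approach buys is self-containedness: it uses only standard facts about quaternion algebras and the canonical involution, avoids the citation to the specific generators of \cite[(6.1)]{mn}, and makes transparent that the only role of $x^2\in F$ is to force $\trd(x)=0$. What the paper's approach buys is brevity given the cited presentation, and consistency with the explicit generator-and-relation computations used elsewhere in the paper.
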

\begin{proof}
By \cite[(6.1)]{mn}, $Q$ is generated as an $F$-algebra by two elements $w,v\in Q$ satisfying
\[w^2\in F^\times,\quad v^2\in F,\quad wv+vw=1,\quad {\rm and}\quad \tau(v)+v=w^{-1}.\]
Hence, $\alt(Q,\tau)=Fw$ and $\sym(Q,\tau)=F+Fw+Fwv$.
Since $x\in\sym(Q,\tau)$ and $x^2\in F$, we have $x=\alpha+\beta w$ for some $\alpha,\beta\in F$.
Thus, $x+\alpha=\beta w\in\alt(Q,\tau)$.
\end{proof}
We now consider the exceptional case in (\ref{sym}).
\begin{prop}\label{cor}
  Suppose that $\car F=2$, $\sigma$ is orthogonal and $\sigma(u)=u$.
  There exists a $\sigma$-invariant quaternion subalgebra of $A$ containing $u$ if and only if $u+\alpha\in\alt(A,\sigma)$ for some $\alpha\in F$ with $\alpha\neq\lambda$.
\end{prop}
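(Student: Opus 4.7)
The statement is an equivalence, and my plan is to prove each direction by reducing to a result already established in the paper.

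For the \emph{if} direction, suppose $u + \alpha \in \alt(A, \sigma)$ for some $\alpha \in F$ with $\alpha \neq \lambda$. The idea is to shift $u$ and $\lambda$ by $\alpha$: set $u' := u + \alpha$ and $\lambda' := \lambda + \alpha$. Using $\car F = 2$, direct checks give $(u')^2 = u^2 + \alpha^2 = (\lambda')^2$, $\sigma(\lambda') = \lambda'$, $u' \notin F$, and $\lambda' + u' = \lambda + u$, whence $\dim_F(\lambda' + u')A = \frac{1}{2}\dim_F A$; thus $(u', \lambda')$ still satisfies the running hypotheses of the section. Moreover $u' \in \alt(A, \sigma)$ and $\lambda' \neq 0$, so we are not in the exceptional case of Proposition~\ref{sq}, and that proposition produces a $\sigma$-invariant quaternion subalgebra of $A$ containing $u'$, hence containing $u = u' + \alpha$.

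For the \emph{only if} direction, let $Q \subseteq A$ be a $\sigma$-invariant quaternion subalgebra with $u \in Q$. My first step would be to show that $\sigma|_Q$ itself is orthogonal: since $\sigma$ is orthogonal, \cite[(2.6(2))]{knus} gives $1 \notin \alt(A, \sigma)$; combined with the inclusion $\alt(Q, \sigma|_Q) \subseteq \alt(A, \sigma)$ this forbids $1 \in \alt(Q, \sigma|_Q)$, forcing $\sigma|_Q$ to be orthogonal. Next, since $u \in \sym(Q, \sigma|_Q)$ (from $\sigma(u) = u$) and $u^2 = \lambda^2 \in F$, Lemma~\ref{quat} applies and yields some $\alpha \in F$ with $u + \alpha \in \alt(Q, \sigma|_Q) \subseteq \alt(A, \sigma)$.

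To rule out $\alpha = \lambda$, I would extract from the proof of Lemma~\ref{quat} the description $\alt(Q, \sigma|_Q) = Fw$ with $w^2 \in F^\times$, so that every nonzero element of $\alt(Q, \sigma|_Q)$ has nonzero square. Since $u \notin F$, $u + \alpha \neq 0$, whence $(u + \alpha)^2 \neq 0$; but $(u + \alpha)^2 = \lambda^2 + \alpha^2 = (\lambda + \alpha)^2$ in characteristic two, so $\lambda + \alpha \neq 0$, i.e.\ $\alpha \neq \lambda$. The main obstacle is the initial recognition that $\sigma|_Q$ must be orthogonal; once this is secured, everything else amounts to direct applications of Lemma~\ref{quat} and Proposition~\ref{sq}.
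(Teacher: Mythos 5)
Your proposal is correct and follows essentially the same route as the paper: the converse is the same shift $u'=u+\alpha$, $\lambda'=\lambda+\alpha$ fed into Proposition~\ref{sq}, and the forward direction is the same application of Lemma~\ref{quat}. The only (harmless) variations are that you explicitly verify $\sigma|_Q$ is orthogonal via $1\notin\alt(A,\sigma)$ -- a point the paper leaves implicit -- and you rule out $\alpha=\lambda$ using the description $\alt(Q,\sigma|_Q)=Fw$ with $w^2\in F^\times$ from the proof of Lemma~\ref{quat}, whereas the paper instead invokes the exceptional case of Proposition~\ref{sq}; both arguments are valid.
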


\begin{proof}
  Suppose that $u\in Q$ for some $\sigma$-invariant quaternion subalgebra $Q$ of $A$.
  As $\sigma(u)=u$, by (\ref{quat}) there exists $\alpha\in F$ such that $u+\alpha\in\alt(Q,\sigma|_Q)\subseteq\alt(A,\sigma)$.
  Since $(u+\alpha)^2=\lambda^2+\alpha^2$, using (\ref{sq}) we obtain $\alpha\neq\lambda$.
  To prove the converse, let $u'=u+\alpha\in\alt(A,\sigma)$ and $\lambda'=\alpha+\lambda$.
  Then $u^{\prime2}=\lambda^{\prime2}\in F^{\times2}$ and $\dim_F(u'+\lambda')A=\dim_F(u+\lambda)A=\frac{1}{2}\dim_FA$.
  The conclusion therefore follows from (\ref{sq}).
\end{proof}

We conclude with an example of a symmetric element $u$ in a central simple algebra with involution $(A,\sigma)$ satisfying $u^2=\lambda^2\neq0$ and $\dim_F(u+\lambda)A=\frac{1}{2}\dim_FA$, which is not contained in any $\sigma$-invariant quaternion subalgebra of~$A$.

\begin{ex}\label{ex2}
  Suppose that $\car F=2$, $\lambda\neq0$ and $(A,\sigma)=(M_4(F),t)$.
  Let
  \[u=\left(\begin{array}{cccc} \lambda & 0 & \lambda & \lambda \\0 & \lambda & \lambda & \lambda\\ \lambda & \lambda & 0 & \lambda \\\lambda & \lambda & \lambda & 0\end{array}\right).\]
  Then $u$ is a symmetric matrix, $u^2=\lambda^2\in F^2$ and $\dim_F(u+\lambda)A=\frac{1}{2}\dim_FA$.
  Since $\alt(A,\sigma)$ consists of all symmetric matrices with zero diagonal, we get $u+\alpha\notin \alt(A,\sigma)$ for every $\alpha\in F$.
  Thus, by (\ref{cor}) there is no $\sigma$-invariant quaternion subalgebra of $A$ containing $u$.
\end{ex}

\small

\noindent A.-H. Nokhodkar, {\tt
    a.nokhodkar@kashanu.ac.ir}\\
Department of Pure Mathematics, Faculty of Science, University of Kashan, P.~O. Box 87317-53153, Kashan, Iran.

\end{document}